\def\XXint#1#2#3{{\setbox0=\hbox{$#1{#2#3}{\int}$ }
\vcenter{\hbox{$#2#3$ }}\kern-.6\wd0}}
\long\def\symbolfootnote[#1]#2{\begingroup%
\def\thefootnote{\fnsymbol{footnote}}\footnote[#1]{#2}\endgroup}
\numberwithin{komcounter}{section}
\newtheorem{defi}{Definition}
\newtheorem{lem}{Lemma}
\newtheorem{prop}{Proposition}
\newtheorem{tw}{Theorem}
\theoremstyle{definition}
\newtheorem{rem}{Remark}
\long\def\symbolfootnote[#1]#2{\begingroup%
\def\thefootnote{\fnsymbol{footnote}}\footnote[#1]{#2}\endgroup}
\newtheoremstyle{remark}
  {}{}{}{}{\bfseries}{.}{.5em}{{\thmname{#1 }}{\thmnumber{#2}}{\thmnote{ (#3)}}}
\theoremstyle{remboldstyle}
\title{ \bf The time fractional Schr\"{o}dinger equation on Hilbert space}
\author{ Przemys{\l}aw
 G\'orka$^1$,  Humberto Prado$^2$ \& Juan Trujillo$^3$ \\
\small{$^{1}$ Department of Mathematics and Information Sciences,}\\
\small{Warsaw University of Technology,}\\
\small{Ul. Koszykowa 75, 00-662 Warsaw, Poland.}\\
{\tt pgorka@mini.pw.edu.pl}\\
\small{$^{2}$ Departamento de Matem\'atica y Ciencia de la
Computaci\'on,}\\
\small{ Universidad de Santiago de Chile }\\
\small{Casilla 307 Correo 2, Santiago, Chile.}\\
\small{$^{3}$ Departamento de An\'{a}lisis Matem\'{a}tico,}\\
\small{ Universidad de la Laguna }\\
\small{La Laguna, Tenerife, Spain.}}
\begin{document}
\maketitle
\begin{abstract}
We study the linear fractional Schr\"{o}dinger equation on a Hilbert space, with a fractional time derivative of order $0<\alpha<1,$ and a self-adjoint generator $A.$
Using the spectral theorem we prove existence and uniqueness of strong solutions, and we show that the solutions are governed
by an operator solution family $\{ U_{\alpha}(t)\}_{t\geq 0}$. Moreover, we prove that the solution family  $U_{\alpha}(t)$ converges strongly
to the family of unitary operators $e^{-itA},$  as $\alpha$ approaches to $1$.
\end{abstract}
\bigskip

\noindent
{\bf Keywords}: fractional quantum mechanics, Caputo derivative, Schr\"{o}dinger equation.
\medskip

\noindent
\emph{Mathematics Subject Classification (2010):} 35Q41, 35R11, 81S99.
\section{Introduction}
The Schr\"{o}dinger equation is the basic equation of quantum mechanics, it describes the evolution
in time of a quantum system. More recently, N. Laskin has introduced the fractional  Schr\"{o}dinger equation, as a result of extending the Feynman path integral, the resulting equation is a fundamental equation in
fractional quantum mechanics \cite{Laskin1, Laskin2, Laskin3}. Furthermore, N. Laskin \cite{Laskin1} states that ``the fractional Schro¨dinger equation provides us with a
general point of view on the relationship between the statistical
properties of the quantum mechanical path and the
structure of the fundamental equations of quantum mechanics".
Naber \cite{Naber} introduced and examined some
properties of the time-fractional Schr\"{o}dinger equation,
\begin{eqnarray}\label{nonl}
   \frac{\partial^{\alpha} u}{\partial t^{\alpha}}(t) &=& (-i)^{\alpha}A u (t)  \\
     u(0) &=& u_0, \quad\quad \quad \quad \quad \quad\quad\nonumber
\end{eqnarray}
in which $(-i)^{\alpha} = e^{-i \alpha \frac{\pi}{2}}$.
It was shown in \cite{Naber} that the above equation (\ref{nonl}) is equivalent to the usual Schr\"{o}dinger equations with a time dependent Hamiltonian. On the other hand, it was point out that the so-called quantum comb model \cite{Baskin, Bayin, Iomin1, Iomin2},
leads to a time-fractional Schr\"{o}dinger equation with $\alpha=\frac{1}{2}$. Equation (\ref{nonl}) describes non-Markovian evolution in Quantum Mechanics. As a result this
system has memory.
Different aspects of the time fractional Schr\"{o}dinger equation have already been studied.
Particular solutions were sought in \cite{Bayin, Dong, Naber} and numerical analysis performed in \cite{Ford}.
Nevertheless, to the best of our knowledge  there are no results in the literature which show in full generality the uniqueness and existence of solutions to the abstract
Schr\"{o}dinger equation on a Hilbert space.

The purpose of this paper is to consider the abstract fractional evolution equation (\ref{nonl}) on a Hilbert space $\cal{H},$
in which $A$ is a positive self adjoint operator  on $\cal{H},$ and  $\frac{\partial^{\alpha} u}{\partial t^{\alpha}}$ is
the Caputo fractional derivative of order $\alpha\in (0,1).$ We show that $A$ generates a family of bounded operators
$\{U_{\alpha}(t) )\}_{t\geq 0}$ which are defined by the functional calculus of  $A$ via the Mittag-Leffler function
when evaluated at $A.$ Moreover if $u_0$ belongs to the domain of $A$ then we show that $u(t)=U_{\alpha}(t)u_0$ is the
unique strong solution of problem (\ref{nonl}). We also study the problem of the continuous dependence on $\alpha$ for
$U_{\alpha}(t),$ and we show that
\[
  \displaystyle\lim_{\alpha\to 1^{-}} U_{\alpha}(t)=e^{-itA},
\]
where $e^{-itA}$ is the
unitary group whose infinitesimal generator corresponds to the self adjoint operator $A$ and in this case we have, as a
limiting process, the classical Theorem of Stone.

The remainder of the paper is structured as follows. In Section 2, we introduce the notations and
recall the notion of the Caputo derivative. We also give the definition of strong solution to the fractional Schr\"{o}dinger equation. Moreover,
we formulate and prove some technical but very crucial lemma. The main result about existence and uniqueness of solution in shown in Section 3.
The properties of the solution operator are formulated and proven in Section 4.

\section{Preliminaries}
We use the standard notation
$$g_{\alpha}(t)=\displaystyle\frac{t^{\alpha-1}}{\Gamma(\alpha)},\quad \mbox{for}\quad \alpha>0,\quad  t>0.$$
We recall the
definition of the Riemmann Liouville integral by  the convolution product,
 $$J^{\alpha}f(t)=\frac{1}{\Gamma(\alpha)}\displaystyle\int_0^t(t-s)^{\alpha-1}f(s)ds,$$ for a given locally integrable
function  $f$  defined on  the half line $\mathbb{R}_+=[0,\infty)$ and taking values on a Banach space $X.$ Henceforth we use the notation,
\begin{eqnarray*}
J^{\alpha}f(t)=(g_{\alpha}*f)(t).
\end{eqnarray*}

\noindent Then the following property holds
\begin{equation}\label{RL1} J^{\alpha+\beta}f=J^{\alpha}J^{\beta}f,\quad\quad\quad \mbox{for}\quad\quad \alpha,\,\, \beta>0,
\end{equation}
in which $f$ is suitable enough.

 Hereafter we will consider  the following definition of the fractional derivative of order $\alpha\in (0,1).$  Assume  that $u\in C([0,\infty);X)$ and that the convolution
$g_{1-\alpha}*u$\,\, belongs to\,\, $C^1((0,\infty);X).$ Then the Caputo fractional derivative of order $\alpha\in (0,1),$ can be interpreted as
$$
  D^{\alpha} u(t)=\frac{d}{dt}(g_{1-\alpha}*u)(t)-u(0)g_{1-\alpha}(t)=\frac{1}{\Gamma(1-\alpha)}\left[\frac{d}{dt}\left(\int_0^t(t-s)^{-\alpha}u(s)ds\right)-\frac{u(0)}{t^{\alpha}}\right].
$$
 Furthermore if $u\in AC([0,\infty);X),$ in which $AC([0,\infty);X)$ is the space of absolutely continuous functions on $[0,\infty),$ then we can also realize the Caputo derivative as
\begin{equation}\label{cap1} D^{\alpha}u(t)=J^{1-\alpha}u'(t)\quad\quad\mbox{for}\quad\quad 0<\alpha<1;
\end{equation}
see \cite{Baz00,KST} for further properties and definitions.\\
 Henceforth we shall denote the Caputo derivative either by $D^{\alpha} u(t)$\,\, or\,\, $\displaystyle\frac{\partial^{\alpha} u}{\partial t^{\alpha}}(t),$ indistinctly.

\rem\label{rmk1}\rm{ We let $E_{\alpha}(z)$\, be the Mittag-Leffler function, that is,
$$E_{\alpha}(z)=\sum_{k=0}^{\infty} \frac{z^k}{\Gamma (\alpha k+1)}\,\,\,\,\,\,\,(\alpha>0,\,\,\,z\in\mathbb C).$$   Let $X$ be a Banach space, and suppose that $u_0\in X$ and $\omega\in \mathbb C.$ If $0 < \alpha < 1.$
 Then the equation
 \begin{equation}\label{ex}D_t^{\alpha}u(t) =\omega u(t),\,\,\,\, u(0) =u_0,
\end{equation}
 has a unique solution given by  $$u(t)=u_0E_{\alpha}(\omega t^{\alpha})$$
see \cite{Baz00,GIL, KST}. Moreover, the uniqueness of the solution of (\ref{ex}) follows by the uniqueness theorem for the Laplace transform.

Let $A$ be a  densely defined self-adjoint operator on a Hilbert space $\cal{H},$ and  let $0<\alpha<1.$  For a given $u_0\in \cal{H}$ we study the following equation of fractional order $\alpha$

\begin{eqnarray}\label{ev1}
    \frac{\partial^{\alpha} u}{\partial t^{\alpha}}(t) &=& (-i)^{\alpha}A u (t), \quad\quad\quad t>0,\\
     u(0) &=& u_0. \quad\quad \quad \quad \quad \quad\,\,\,\nonumber
\end{eqnarray}
We first introduce the notion of strong solution for the abstract fractional Cauchy problem (\ref{ev1}).
\begin{defi} Let $0<\alpha<1.$ Assume that $u_0\in D(A).$ A function $u$ is called a strong solution of (\ref{ev1}) if $u\in C(\mathbb R_+;D(A))$ and $g_{1-\alpha}*u$\,\, belongs to\,\, $C^1((0,\infty);\cal{H}),$
 and  (\ref{ev1}) holds for all $t>0.$
\end{defi}

We will show that the strong solution of (\ref{ev1}) is determined by the functional  calculus for a self-adjoint operator when its applied to the Mittag-Leffler function. Moreover, the following lemma will give us the necessary bounds we need in the proof of the qualitative properties of the solution operator.\\

In order to prove the next lemma we recall from  \cite[Theorem 2.3 equation 26] {GLL}} that the Mittag-Leffler function has the following representation for $\alpha\in (0,1],$
\begin{equation}\label{mitag1}
	E_{\alpha}(z)=\int_0^{\infty} K_{\alpha}(r,z)dr+\frac{1 }{\alpha}e^{z^{1/\alpha}},\quad\quad |arg(z)|<\pi/\alpha\quad \mbox{and}\quad z\neq 0,
\end{equation}
in which
\begin{equation}\label{K0} K_{\alpha}(r,z)=-\frac{e^{-r^{1/\alpha}}z\, \sin(\pi\alpha)}
{\pi\alpha(r^2-2rz \cos(\pi\alpha)+z^2)}.
\end{equation}

\begin{lem}\label{lema3}
 $a)$ Let $\alpha_0\leq \alpha < 1/2$, in which $\alpha_0>0.$ Then there is a positive constant $ M(\alpha)$ such that for all $t\geq 0$
\begin{equation}\label{b1}
	\displaystyle\sup_{\omega \geq 0} |E_{\alpha}((-it)^{\alpha}\, \omega)|\leq M(\alpha).	
\end{equation}
$b)$ There is $M>0$ such that for all $t\geq 0$ and all $\alpha\in [1/2,1)$

\begin{equation}\label{b2}
\displaystyle\sup_{\omega \geq 0}|E_{\alpha}((-it)^{\alpha}\, \omega)|\leq M.
\end{equation}
 \end{lem}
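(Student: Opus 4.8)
The plan is to apply the integral representation (\ref{mitag1}) to the concrete argument $z=(-it)^\alpha\omega$. Since $(-i)^\alpha=e^{-i\alpha\pi/2}$ and $t,\omega\ge 0$, we have $z=t^\alpha\omega\,e^{-i\alpha\pi/2}$, so $z$ lies on the fixed ray $\arg z=-\alpha\pi/2$, its modulus $\rho:=t^\alpha\omega$ ranges over all of $[0,\infty)$ as $t,\omega$ do, and the admissibility condition $|\arg z|=\alpha\pi/2<\pi/\alpha$ holds for every $\alpha\in(0,1)$. If $z=0$ (that is, $t=0$ or $\omega=0$) then $E_\alpha(z)=1$ and there is nothing to prove, so assume $z\ne 0$ and write, via (\ref{mitag1}),
\[
E_\alpha(z)=\int_0^\infty K_\alpha(r,z)\,dr+\frac1\alpha\,e^{z^{1/\alpha}}.
\]

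I would first dispose of the exponential term. Because $\arg z=-\alpha\pi/2\in(-\pi,\pi]$, the principal power is $z^{1/\alpha}=\rho^{1/\alpha}e^{-i\pi/2}=-i\,t\,\omega^{1/\alpha}$, which is purely imaginary; hence $|e^{z^{1/\alpha}}|=1$ and this term contributes exactly $1/\alpha$, uniformly in $t$ and $\omega$.

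The heart of the matter is a uniform bound for $\int_0^\infty|K_\alpha(r,z)|\,dr$. Factoring the denominator in (\ref{K0}) as $r^2-2rz\cos(\pi\alpha)+z^2=(r-ze^{i\pi\alpha})(r-ze^{-i\pi\alpha})$ and substituting $z=\rho e^{-i\alpha\pi/2}$ turns the two factors into $r-\rho e^{i\alpha\pi/2}$ and $r-\rho e^{-i3\alpha\pi/2}$. Here the key elementary estimate is
\[
|r-\rho e^{i\psi}|^2=(r+\rho)^2-4r\rho\cos^2(\psi/2)\ge(r+\rho)^2\sin^2(\psi/2),
\]
which holds because $4r\rho\le(r+\rho)^2$; applied with $\psi=\alpha\pi/2$ and with $\psi=3\alpha\pi/2$ (both giving $\psi/2\in(0,\pi)$ for $\alpha\in(0,1)$, so $\sin(\psi/2)>0$) it yields $|r-\rho e^{i\alpha\pi/2}|\ge(r+\rho)\sin(\alpha\pi/4)$ and $|r-\rho e^{-i3\alpha\pi/2}|\ge(r+\rho)\sin(3\alpha\pi/4)$. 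Together with $|\sin(\pi\alpha)|\le 1$ this gives
\[
|K_\alpha(r,z)|\le\frac{\rho\,e^{-r^{1/\alpha}}}{\pi\alpha\,(r+\rho)^2\sin(\alpha\pi/4)\sin(3\alpha\pi/4)},
\]
and since $e^{-r^{1/\alpha}}\le 1$ and $\int_0^\infty\rho(r+\rho)^{-2}\,dr=1$ for every $\rho>0$, integration in $r$ produces $\int_0^\infty|K_\alpha(r,z)|\,dr\le\big(\pi\alpha\sin(\alpha\pi/4)\sin(3\alpha\pi/4)\big)^{-1}$. Collecting the two contributions we obtain, for all $t\ge 0$ and $\omega\ge 0$,
\[
|E_\alpha((-it)^\alpha\omega)|\le\frac{1}{\pi\alpha\sin(\alpha\pi/4)\sin(3\alpha\pi/4)}+\frac1\alpha=:M(\alpha).
\]

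For part $a)$ this is already a finite positive constant for each fixed $\alpha\in[\alpha_0,1/2)$ (and one even gets a uniform bound there, replacing $\alpha$ by $\alpha_0$ in the denominator, since $1/\alpha\le 1/\alpha_0$, $\sin(\alpha\pi/4)\ge\sin(\alpha_0\pi/4)$, and $\sin$ is increasing on $[0,3\pi/8)\supset\{3\alpha\pi/4:\alpha<1/2\}$). For part $b)$ I would make $M(\alpha)$ explicitly uniform on $[1/2,1)$: there $1/\alpha\le 2$ and $1/(\pi\alpha)\le 2/\pi$, while $\sin(\alpha\pi/4)\ge\sin(\pi/8)$ by monotonicity on $[\pi/8,\pi/4]$, and $\sin(3\alpha\pi/4)\ge\sin(3\pi/4)=\sqrt2/2$ since the minimum of $\sin$ on $[3\pi/8,3\pi/4]$ is attained at $3\pi/4$; hence $M(\alpha)\le\frac{2\sqrt2}{\pi\sin(\pi/8)}+2=:M$. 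The only genuinely delicate step is the lower bound on $r^2-2rz\cos(\pi\alpha)+z^2$: the naive bound $\rho^2\sin^2(\text{angle})$ degenerates as $\rho\to 0$, and it is precisely the factorization together with the $(r+\rho)^2$ lower bound — which makes the weight $\rho/(r+\rho)^2$ have integral exactly $1$ irrespective of $\rho$ — that delivers a constant independent of $\rho=t^\alpha\omega$, hence of $t$ and $\omega$.
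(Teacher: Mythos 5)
Your proof is correct, and while it starts from the same integral representation \eqref{mitag1} as the paper, the way you control the kernel $K_\alpha$ is genuinely different and, I think, cleaner. The paper first reduces to $t=1$ by scaling and then bounds the modulus of the quadratic $r^2-2rz\cos(\pi\alpha)+z^2$ from below by the absolute value of its \emph{real} part when $\alpha<1/2$ and of its \emph{imaginary} part when $\alpha\ge 1/2$ (the sign of $\cos(\pi\alpha)$ dictates which one is usable); both lower bounds are of order $\omega^2$, so they degenerate as $\omega\to 0$ and the paper must treat small $\omega$ by a separate argument (the power series of $E_\alpha$, Stirling's formula, and continuity of $\alpha\mapsto E_\alpha(c)$). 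Your factorization $r^2-2rz\cos(\pi\alpha)+z^2=(r-ze^{i\pi\alpha})(r-ze^{-i\pi\alpha})$, combined with the elementary bound $|r-\rho e^{i\psi}|\ge (r+\rho)\,|\sin(\psi/2)|$, replaces all of this by a single estimate valid simultaneously for every $\alpha\in(0,1)$ and every $\rho=t^{\alpha}\omega\ge 0$: the weight $\rho/(r+\rho)^2$ integrates to exactly $1$ regardless of $\rho$, so no case split in $\omega$, no scaling reduction, and no continuity-in-$\alpha$ argument are needed, and the resulting constant $M(\alpha)=\bigl(\pi\alpha\sin(\alpha\pi/4)\sin(3\alpha\pi/4)\bigr)^{-1}+\alpha^{-1}$ is explicit and uniform on $[\alpha_0,1)$ (strictly more than the lemma asks for in part $a)$). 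You also make explicit that the term $\frac1\alpha e^{z^{1/\alpha}}$ has modulus $1/\alpha$ because $z^{1/\alpha}$ is purely imaginary on the ray $\arg z=-\alpha\pi/2$, a point the paper leaves implicit; and you correctly check the admissibility of the representation on that ray and dispose of $z=0$ separately, so I see no gap.
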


\begin{proof}
First we show (\ref{b1}) that  is $\alpha_0\leq \alpha<1/2$. We notice that it suffices to prove assertion  (\ref{b1}) for $t=1$. Indeed, let us assume that (\ref{b1}) holds for $t=1$, then for any $t>0$ we have that,
\[
	|E_{\alpha}((-it)^{\alpha}\, \omega)| = |E_{\alpha}((-i)^{\alpha}\, t^{\alpha}\omega)| \leq M(\alpha).
\] To begin we assume  $\omega\geq 1/\alpha_0.$
 Next we recall that $(-i)^{\alpha}=e^{-i\alpha\pi/2}.$ Then we proceed to estimate $|K_{\alpha}(r,(-i)^{\alpha}\omega)|$ for arbitrary  $\omega\geq 1/ \alpha_0.$ Thus,
\begin{equation}\label{ineq1}|K_{\alpha}\left(r,(-i)^{\alpha}\omega\right)|\leq \frac{B}{\pi\alpha}\frac{e^{-r^{1/\alpha}}\omega}
{|(r^2-2r(a-ib)\omega A+\omega^2(A-iB)|},
\end{equation}
where $A=\cos(\pi\alpha),$\, $B=\sin(\pi\alpha),$\, $a=\cos(\alpha\pi/2),$\, $b=\sin(\alpha\pi/2)$ and these quantities are all positive since $0<\alpha<1/2.$ Next we set $u(r)$ and $v(r)$ the real  and imaginary parts respectively  of the denominator on the right hand side of (\ref{ineq1}), that is $$u(r)=r^2-2ra\omega A+\omega^2 A,\,\,\mbox{ and}\,\,\, v(r)=2rb\omega A-\omega^2 B.$$
Hence,
 \begin{equation}\label{ineq2}
 |K_{\alpha}\left(r,(-i)^{\alpha}\omega\right)|\leq \frac{B}{\pi\alpha}\frac{e^{-r^{1/\alpha}}\omega}
{|(r^2-2r(a-ib)\omega A+\omega^2(A-iB)|}\leq \frac{Be^{-r^{1/\alpha}}\omega}{\pi\,\alpha\, |\,u(r)\,|}
\end{equation}
 On the other hand the quadratic $u(r)=r^2-2ra\omega A+\omega^2 A$ is positive for all real $r$ and its minimum  equals to $w^2 A(1-a^2A)>0$ since $a^2A<1,$ and $A>0.$ But then, $$u(r)=r^2-2ra\omega A+\omega^2 A\geq w^2 A(1-a^2A)>0.$$ Hence the right side of (\ref{ineq2}) turns out to be less than or equals to
\begin{equation}\label{ineq3}
 \frac{Be^{-r^{1/\alpha}}\omega}{\pi\,\alpha\, w^2 A(1-a^2A)}
 \end{equation}
Therefore, from (\ref{ineq2}) and (\ref{ineq3})  follows that
\begin{equation*}\label{ineq4}
|K_{\alpha}\left(r,(-i)^{\alpha}\omega\right)|\leq \frac{e^{-r^{1/\alpha}}}{\pi A(1-a^2A)},
\end{equation*}
 since $0<\alpha_0\leq \alpha<1/2$ and $\omega\geq 1/\alpha_0.$ Furthermore

 \begin{eqnarray}\label{ineq5}
|K_{\alpha}(r,(-i)^{\alpha}\omega)|\leq \left\{ \begin{array}{lc}
\displaystyle\frac{e^{-r}}{\pi A(1-a^2A)}& \textrm{ for } r >1\\\

\displaystyle\frac{1}{\pi A(1-a^2A)} & \textrm{ for } r \leq 1.
\end{array} \right.
\end{eqnarray}
Therefore, from  (\ref{ineq5}) we obtain that the integral
\begin{equation*}\label{K1}
	\int_0^{\infty} K_{\alpha}(r,(-i)^{\alpha}\omega)dr,
\end{equation*}
is bounded independently of  $\omega\geq \frac{1}{\alpha_0}$. But then, it follows from the integral representation (\ref{mitag1}) that there is a bound $M_1(\alpha)$ such that,
\begin{equation}\label{b3}
	\displaystyle\sup_{\omega \geq 1/\alpha_0} |E_{\alpha}((-i)^{\alpha}\, \omega)|\leq M_1(\alpha).	
\end{equation}

Now if $\omega \leq 1/\alpha_0,$ then we have that
\[
 |(-i)^{\alpha}\omega|\leq \frac{1}{\alpha_0}.
\]
Thus, from the very definition of the Mittag-Leffler function, we obtain that,
\[
	|E_{\alpha}((-i)^{\alpha}\omega)| \leq \sum_{k=0}^{\infty} \frac{\left(\frac{1}{\alpha_0}\right)^k}{\Gamma (\alpha k+1)} =E_{\alpha}(1/\alpha_0).
\]
Moreover, by the Stirling formula
\[
  \Gamma (x) =\sqrt{2 \pi} x^{x-\frac{1}{2}} e^{-x +\frac{\theta}{12x}},
\]
where $\theta \in [0,1]$, we have
\[
  \frac{\left(\frac{1}{\alpha_0}\right)^k}{\Gamma (\alpha k+1)}  \leq \frac{e^2}{\sqrt{2 \pi}} \left(\frac{e}{\alpha_0 ^{\alpha_0 +1} k^{\alpha_0}} \right)^k.
\]
Hence, by the Lebesgue theorem, we obtain that the map $ [\alpha_0, 1] \ni \alpha \mapsto E_{\alpha}(1/\alpha_0)$ is continuous.
Therefore, there exists $M(\alpha_0)$ such that
\begin{equation}\label{E8}
		\sup_{\omega \leq 1/\alpha_0} |E_{\alpha}((-i)^{\alpha}\omega)|\leq   \sup_{\alpha\in [\alpha_0,1/2]} E_{\alpha}(1/\alpha_0) =M (\alpha_0).
\end{equation}
Now, the proof of assertion (\ref{b1})  follows from (\ref{b3}) together with (\ref{E8}).

Next we  show (\ref{b2}). First we assume that $\omega\geq 2$  under the condition $1/2\leq \alpha <1$ from the hypothesis.
Again it suffices to prove assertion  (\ref{b2}) for $t=1$.  We notice that  $A\leq 0,$ and $B,$ $a,$ and $b$ are all positive. Thus $|v(r)|=|2rb\omega A-B\omega^2|=-2rb\omega A+B\omega^2\geq B\omega^2>0.$ Hence,
\begin{equation*}
 |K_{\alpha}\left(r,(-i)^{\alpha}\omega\right)|\leq   \frac{e^{-r^{1/\alpha} }B\,\omega}{\pi\alpha\, |\,v(r)\,|} \leq   \frac{e^{-r^{1/\alpha} }}{\pi\alpha \omega}
 \leq   \frac{e^{-r^{1/\alpha}}}{\pi}.
\end{equation*}
Furthermore,
\begin{eqnarray*}\label{ineq7}
|K_{\alpha}(r,(-i)^{\alpha}\omega)|\leq \left\{ \begin{array}{lc}
\displaystyle\frac{e^{-r}}{\pi}& \textrm{ for } r >1\\\\
\displaystyle\frac{1}{\pi} & \textrm{ for } r \leq 1.
\end{array} \right.
\end{eqnarray*}
Hence, reasoning as in the proof of (\ref{b1}) we obtain that  there is a positive constant $M$ which in this case does not depends on the value of $\alpha\in [1/2,1),$ so that
\begin{equation*}\label{ineq8}\displaystyle\sup_{\omega \geq 2} |E_{\alpha}((-i)^{\alpha}\, \omega)|\leq M.
\end{equation*}
 Now  by an applications of the same argument as in (\ref{E8}) we can show  that there is a $M_2>0$ independent of $\alpha\in [1/2,1)$ such that
 \begin{equation*}\label{ineq9}
 \displaystyle\sup_{\omega \leq 2} |E_{\alpha}((-i)^{\alpha}\, \omega)|\leq M_2.
 \end{equation*}
 Thus the proof of (\ref{b2}) now follows from these last two inequalities.	

\end{proof}

\section{Existence of the dynamics.}
In this part of our paper we state and prove our principal assertion.
\begin{tw}\label{thm1} Let $\mathcal{H}$ be a Hilbert space and let $A$ be a positive self-adjoint operator on $\mathcal{H}.$
Then there exists a unique strong solution to the problem
\begin{eqnarray}\label{stone}
    \frac{\partial^{\alpha} u}{\partial t^{\alpha}}(t) &=& (-i)^{\alpha}A u (t), \quad\quad\quad t>0\\
     u(0) &=& u_0 \quad\quad \quad \quad \quad \quad\,\,\,u_0\in D(A) \quad .\nonumber
\end{eqnarray}
Moreover, there  is a measure space $(\Omega, \mu)$, a Borel
measurable function $a$ on $\Omega$ and a unitary map $W: L^2(\Omega) \rightarrow \mathcal{H}$ such that the unique solution of problem (\ref{stone}) has the following representation
\begin{eqnarray*}
	u(t)=W(E_{\alpha}((-it)^{\alpha}\,a(\cdot\,))W^{-1}u_0.
\end{eqnarray*}
\end{tw}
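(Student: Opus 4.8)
The plan is to diagonalize $A$ via the spectral theorem, write the candidate solution through the functional calculus applied to the Mittag--Leffler function, control it with Lemma~\ref{lema3}, and then verify the fractional equation by reducing everything pointwise on the spectrum to the scalar situation of Remark~\ref{rmk1}. Since $A$ is positive and self-adjoint, the multiplication form of the spectral theorem gives a $\sigma$-finite measure space $(\Omega,\mu)$, a measurable $a:\Omega\to[0,\infty)$, and a unitary $W:L^2(\Omega)\to\mathcal H$ with $A=WM_aW^{-1}$ and $D(A)=W\{f\in L^2(\Omega):af\in L^2(\Omega)\}$. Setting $f_0:=W^{-1}u_0$, we have $f_0, af_0\in L^2(\Omega)$ because $u_0\in D(A)$, and we define $u(t):=W\bigl(E_{\alpha}((-it)^{\alpha}a(\cdot))f_0\bigr)$. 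By Lemma~\ref{lema3} (part $a)$ if $\alpha<1/2$, part $b)$ if $\alpha\ge1/2$) there is a finite $M=M(\alpha)$ with $|E_{\alpha}((-it)^{\alpha}\omega)|\le M$ for all $\omega\ge0$, $t\ge0$; hence $u(t)\in\mathcal H$ with $\|u(t)\|\le M\|u_0\|$, and since $a(\cdot)E_{\alpha}((-it)^{\alpha}a(\cdot))f_0$ is a bounded multiple of $af_0\in L^2(\Omega)$ we also get $u(t)\in D(A)$ with $Au(t)=W\bigl(a(\cdot)E_{\alpha}((-it)^{\alpha}a(\cdot))f_0\bigr)$. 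This is precisely the asserted representation.

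For regularity, if $t_n\to t$ then $E_{\alpha}((-it_n)^{\alpha}a(\xi))\to E_{\alpha}((-it)^{\alpha}a(\xi))$ pointwise (continuity of the entire function $E_\alpha$ and of $s\mapsto(-is)^{\alpha}$), dominated by $M|f_0|\in L^2(\Omega)$ and $M|af_0|\in L^2(\Omega)$; dominated convergence gives $u(t_n)\to u(t)$ and $Au(t_n)\to Au(t)$, so $u\in C(\mathbb R_+;D(A))$. Next fix $\xi$ with $\omega:=a(\xi)\ge0$ and put $v_\xi(t):=E_{\alpha}((-it)^{\alpha}\omega)=E_{\alpha}((-i)^{\alpha}\omega t^{\alpha})$ for $t>0$; by Remark~\ref{rmk1} this solves $D^{\alpha}v_\xi=(-i)^{\alpha}\omega v_\xi$, $v_\xi(0)=1$, i.e.\ $\frac{d}{dt}(g_{1-\alpha}*v_\xi)(t)=(-i)^{\alpha}\omega v_\xi(t)+g_{1-\alpha}(t)$. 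Since $W$ is bounded, $(g_{1-\alpha}*u)(t)=W\bigl(\int_0^t g_{1-\alpha}(t-s)E_{\alpha}((-is)^{\alpha}a(\cdot))\,ds\,f_0\bigr)$, and one shows this is $C^1$ on $(0,\infty)$ by passing the $t$-derivative through $W$ and the $\mu$-integral; the interchange is justified because, by the uniform bound $M$ of Lemma~\ref{lema3} together with $g_{1-\alpha}\in L^1_{\mathrm{loc}}$, the difference quotients of $s\mapsto g_{1-\alpha}(t-s)v_\xi(s)$ are dominated on compact $t$-intervals by a fixed function integrable in $(s,\xi)$, and the pointwise limit is the one dictated by the scalar identity. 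Subtracting $u_0g_{1-\alpha}(t)$ yields $D^{\alpha}u(t)=(-i)^{\alpha}Au(t)$ for $t>0$, while $u(0)=W(E_\alpha(0)f_0)=u_0$; thus $u$ is a strong solution.

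For uniqueness, let $\tilde u$ be any strong solution and let $P_n:=\chi_{[0,n]}(A)$ be the spectral projection: $P_n$ is bounded, commutes with $A$, with convolution in $t$ and with $\frac{d}{dt}$, and $P_n\to I$ strongly. Then $P_n\tilde u$ is a strong solution of the same equation with $A$ replaced by its restriction $A_n$ to the invariant subspace $\mathrm{ran}\,P_n$, on which $A_n$ is bounded, positive and self-adjoint. Applying $J^{\alpha}$ and the identity $J^{\alpha}D^{\alpha}w=w-w(0)$ (valid under the standing regularity; see \cite{Baz00,KST}), $P_n\tilde u$ satisfies the linear Volterra equation $P_n\tilde u(t)=P_nu_0+(-i)^{\alpha}J^{\alpha}(A_nP_n\tilde u)(t)$ with continuous kernel, which has a unique continuous solution on each interval (contraction on subintervals of length $\delta$ with $\delta^{\alpha}\|A_n\|/\Gamma(\alpha+1)<1$); equivalently one diagonalizes $A_n$ again and invokes the scalar uniqueness of Remark~\ref{rmk1}. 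Either way $P_n\tilde u(t)=P_nu(t)$, and letting $n\to\infty$ with $P_n\to I$ strongly gives $\tilde u=u$.

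The delicate point is the middle step: showing $g_{1-\alpha}*u\in C^1((0,\infty);\mathcal H)$ and differentiating under both the unitary $W$ and the spectral integral. This is exactly where the uniform-in-$\omega$ estimates of Lemma~\ref{lema3} are indispensable, since $a$ is unbounded whenever $A$ is; the remaining ingredients are the spectral theorem, dominated convergence, and the standard calculus of $J^{\alpha}$ and $D^{\alpha}$.
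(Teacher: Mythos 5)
Your existence argument is essentially the paper's: same spectral representation $A=WM_aW^{-1}$, same candidate $u(t)=W\bigl(E_{\alpha}((-it)^{\alpha}a(\cdot))W^{-1}u_0\bigr)$, same use of Lemma~\ref{lema3} to dominate $E_{\alpha}((-it)^{\alpha}a(\xi))f_0$ and $a(\xi)E_{\alpha}((-it)^{\alpha}a(\xi))f_0$ in $L^2(\Omega)$, and the same reduction of the verification of \eqref{stone} to the scalar identity of Remark~\ref{rmk1} fibrewise over $\Omega$. Your uniqueness argument, however, is genuinely different. The paper applies $W^{-1}$ to a given strong solution, checks that $g_{1-\alpha}*v\in C^1((0,\infty);L^2(\Omega))$ with $v=W^{-1}u$, obtains the transported equation \eqref{lin2} on $L^2(\Omega)$, and then invokes the scalar uniqueness of Remark~\ref{rmk1} (via the Laplace transform) pointwise in $\xi$. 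You instead truncate with the spectral projections $P_n=\chi_{[0,n]}(A)$, reduce to a bounded generator $A_n$, convert to the Volterra equation $P_n\tilde u=P_nu_0+(-i)^{\alpha}J^{\alpha}(A_nP_n\tilde u)$ using $J^{\alpha}D^{\alpha}w=w-w(0)$, and conclude by a contraction argument before letting $n\to\infty$. Your route buys something: it sidesteps the (slightly delicate, and in the paper somewhat glossed-over) passage from an $L^2(\Omega)$-valued identity to an almost-everywhere family of scalar ODEs, at the cost of having to justify that $P_n$ commutes with the convolution and the $t$-derivative and that the cited $J^{\alpha}D^{\alpha}$ identity holds for strong solutions. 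One small imprecision: in the $C^1$ step you propose dominating the difference quotients of $s\mapsto g_{1-\alpha}(t-s)v_{\xi}(s)$ directly, which is awkward because of the singularity of $g_{1-\alpha}$ at the endpoint; the cleaner route (the paper's) is to use the already-known scalar derivative $\phi'(s)=(-i)^{\alpha}a(\xi)E_{\alpha}((-is)^{\alpha}a(\xi))+g_{1-\alpha}(s)$ together with the Mean Value Theorem, so that the difference quotient of $\phi$ itself is dominated by $C\bigl(c_{\alpha}(t)+|a(\xi)|\bigr)|f_0(\xi)|\in L^2(\Omega)$ precisely because $u_0\in D(A)$. This is a fixable presentation issue, not a gap.
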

\begin{proof}

Let us recall that because of the spectral theorem for a self-adjoint operator  $A: D(A)\subset H \rightarrow H$, there exists a measure space
$(\Omega,\mu)$, and a Borel measurable function $a$ and a unitary map $W: L^2(\Omega,\mu)\rightarrow H$ such that the
following diagram commutes
\begin{displaymath}
  \xymatrix{
    L^2 (\Omega, \mu) \ar[r]^{M_a} \ar[d]_W & L^2(\Omega, \mu)  \\
    H \ar[r]_A & H \ar[u]^{W^{-1}}}
\end{displaymath}
for each $f\in L^{2}(\Omega, \mu)$ such that $Wf \in D(A)$. Moreover, if $f\in L^2(\Omega, \mu)$ is given, then $Wf \in D(A)$ if and only if $M_a f \in L^{2}(\Omega, \mu)$; see e.g. \cite{ RS,T}, where $M_a f (x) =a(x)f(x)$.

Thus, the spectral theorem ensure us that there exists a unitary map $W$ from $L^2(\Omega)$ onto $\mathcal{H}$  such that
\begin{equation}\label{mult}
    W^{-1}AW\varphi(\xi)=a(\xi)\varphi(\xi),\quad \xi\in \Omega.
\end{equation}
Now the proof of the theorem falls naturally into two parts.

{\bf Uniqueness.}

Let us assume that $u$ is a strong solution to problem (\ref{stone}).  We define  $v(t,\xi)= (W^{-1}u(t))(\xi)$. Then it follows from (\ref{mult}) that
\begin{equation}
(W^{-1}Au(t))(\xi)=W^{-1}AW v(t)(\xi)=a(\xi) v(t,\xi).
\end{equation}
Let us observe that $g_{1-\alpha}*v \in C^1((0,\infty);L^2(\Omega))$. Indeed, we shall show that
\[
  \frac{d}{dt} g_{1-\alpha}*v = W^{-1}\left(\frac{d}{dt} g_{1-\alpha}*u\right).
\]
Next we set $\Theta (t) = g_{1-\alpha}*v.$ Then using the fact that $W$ is an isometry, we get that
\begin{eqnarray*}
    \left\|\frac{\Theta(t+h)-\Theta(t)}{h}- W^{-1}(\frac{d}{dt} g_{1-\alpha}*u) \right\|_{L^2(\Omega)} \\
=\left\|W^{-1}\left(\frac{g_{1-\alpha}*u (t+h)-g_{1-\alpha}*u(t)}{h}- \frac{d}{dt} g_{1-\alpha}*u(t)\right)\right\|_{L^2(\Omega)}\\
 =  \left\|\frac{g_{1-\alpha}*u (t+h)-g_{1-\alpha}*u(t)}{h}- \frac{d}{dt} g_{1-\alpha}*u(t)\right\|_{\mathcal{H}} \underset {h \rightarrow 0}\longrightarrow 0,
\end{eqnarray*}

\noindent where the convergence follows from the assumptions on the function $u$. Moreover, we can easily check that derivative $\Theta'$ is a continuous function, thus we obtain that $g_{1-\alpha}*v \in C^1((0,\infty);L^2(\Omega))$.
Furthermore, by continuity of $W$ have
\[
  W^{-1}\left(\frac{d}{dt} g_{1-\alpha}*u\right)=\frac{d}{dt} g_{1-\alpha}*W^{-1}u.
\]
Thus, from the  definition of the Caputo derivative we obtain that
 \begin{eqnarray*}
	  W^{-1}\frac{\partial^{\alpha} u}{\partial t^{\alpha}}(t)&=&  W^{-1}\left(\frac{d}{dt} g_{1-\alpha}*u -u_0 g_{1-\alpha} \right)\\
&=&\frac{d}{dt} g_{1-\alpha}* W^{-1}u -v_0 g_{1-\alpha}\\
&=&  \frac{\partial^{\alpha} v(t,\cdot\,)}{\partial t^{\alpha}}.
 \end{eqnarray*}
 Now if we apply $W^{-1}$ to both sides of equation (\ref{stone}), then we obtain the following equation on $L^2(\Omega),$
\begin{eqnarray}\label{lin2}
    \frac{\partial^{\alpha} v(t,\cdot\,)}{\partial t^{\alpha}} &=& (-i)^{\alpha}a(\cdot\,) v(t,\cdot\,), \quad\quad\quad t>0\\
     v(0,\cdot\,) &=&v_0, \nonumber
\end{eqnarray}
where $v_0= W^{-1}u_0$ and $u_0\in D(A)$. Now, it follows from the Remark \ref{rmk1} that the unique solution of the above fractional differential
 equation (\ref{lin2}) is given by
 $$v(t,\xi)=E_{\alpha}((-it)^{\alpha})\,a(\xi))v_0.$$
Since $(W^{-1}u(t))(\xi)=E_{\alpha}((-it)^{\alpha}\,a(\xi))v_0,$ we get that $u$ is given by
\begin{eqnarray}\label{rozwiazanie}
      u(t)=W(E_{\alpha}((-it)^{\alpha}\,a(\cdot\,))W^{-1}u_0), \quad\quad \quad \quad u_0\in D(A).
\end{eqnarray}
This finishes with the proof of the uniqueness property.

{\bf Existence.}

Next, we shall show that $u(t)$ given by formula (\ref{rozwiazanie}) is indeed a strong solution to the initial value problem (\ref{stone}).
First of all, we prove that $u \in C(\mathbb{R}_{+};D(A))$. We need to show that $u(t) \in D(A)$, for all $t\geq 0$. For this purpose let us recall that
  $$Ah=W(a(\cdot\,)(W^{-1}h)(\cdot\,))\,\,\,\,\,\,\mbox{for}\,\,\,\,h\in D(A).$$
Thus, by the spectral theorem we know that $h\in D(A)$ if and only if $a(\cdot\,)\,(W^{-1}h)(\cdot\,)\in L^2(\Omega);$ see \cite{RS,T}. Hence,
$u_0\in D(A)$ if and only if $a(\xi)(W^{-1}u_0)(\xi)$ belongs to $L^2(\Omega).$ But then, from the fact that $\xi\mapsto E_{\alpha}((-it)^{\alpha}\,a(\xi))$ is
bounded by Lemma \ref{lema3},\, it follows that the function $$a(\xi)(W^{-1}u(t))(\xi)=
  E_{\alpha}((-it)^{\alpha}\,a(\xi))a(\xi)(W^{-1}u_0)(\xi),\,\,\,\,\,\xi\in \Omega,$$
  is in $L^2(\Omega)$ for all $t\geq 0$ and effectively we get that $u(t) \in D(A)$.
Moreover, since the mapping $t\mapsto E_{\alpha}((-it)^{\alpha}a(\xi)) $ is continuous, the map $u$ is continuous. Indeed, let us take $t_0, t \in \mathbb{R}_+$,
then we have that

\begin{eqnarray*}
 \left\|W\Big((E_{\alpha}((-i(t+t_0))^{\alpha}\,a(\cdot\,))-E_{\alpha}((-it_0)^{\alpha}\,a(\cdot\,)))W^{-1}\Big)u_0\right\|_{\mathcal{H}}=\\
\left\|\Big(E_{\alpha}((-i(t+t_0))^{\alpha}\,a(\cdot\,))-E_{\alpha}((-it_0)^{\alpha}\,a(\cdot\,))\Big)W^{-1}u_0\right\|_{L^2(\Omega)}.
\end{eqnarray*}
Since $(E_{\alpha}((-i(t+t_0))^{\alpha}\,a(\xi))-E_{\alpha}((-it_0)^{\alpha}\,a(\xi))$ is bounded by Lemma \ref{lema3}. Hence,  there exists $M_{\alpha}$ such that
$$\left|(E_{\alpha}((-i(t+t_0))^{\alpha}\,a(\xi))-E_{\alpha}((-it_0)^{\alpha}\,a(\xi))(W^{-1}u_0)(\xi)\right|\leq M_{\alpha} |(W^{-1}u_0)(\xi)|.$$
Thus, by an application of the Lebesgue dominated convergence theorem the proof of the continuity of the function $u$ defined in (\ref{rozwiazanie}) is finished.

 Next, we prove that the map
\[\Phi(t)=\displaystyle\frac{1}{\Gamma(1-\alpha)}\int_0^t(t-s)^{-\alpha}u(s)ds\]
 belongs to  $C^1((0,\infty); \mathcal{H})$. For this purpose we consider the following mapping
\[ \phi(t)=\frac{1}{\Gamma(1-\alpha)}\int_0^t(t-s)^{-\alpha}E_{\alpha}((-is)^{\alpha}\,a(\xi))ds.\]
Once more by  the definition of Caputo derivative we get
\begin{eqnarray*}
  \phi'(t)=(-i)^{\alpha}a(\xi) E_{\alpha}((-it)^{\alpha}\,a(\xi)) + \frac{1}{\Gamma(1-\alpha)} \frac{1}{t^{\alpha}}.
\end{eqnarray*}
Now, we shall show that
\begin{equation}\label{dif}
  \Phi'(t) = W\phi'(t) W^{-1}u_0.
\end{equation}
Let us notice that
\begin{eqnarray*}
    \lim_{h \rightarrow 0}\left|\left(\frac{\phi(t+h)-\phi(t)}{h}-\phi'(t)\right)W^{-1}u_0\right|=0.
\end{eqnarray*}
Moreover, by the Mean Value Theorem and Lemma \ref{lema3} we have that
\begin{eqnarray*}
    \left|\left(\frac{\phi(t+h)-\phi(t)}{h}-\phi'(t)\right)W^{-1}u_0\right| &=&\left|\left(\frac{1}{h}\int_t^{t+h} \phi'(s) ds-\phi'(t)\right)W^{-1}u_0\right|\\
    &\leq &
C(c_{\alpha}(t) + \left|a(\xi)|)|W^{-1}u_0\right|
\end{eqnarray*}


\noindent for some constants $C$ and $c_{\alpha}(t)$ independent on $h$. Since $u_0 \in D(A)$, we have $a(\xi)W^{-1}u_0, W^{-1}u_0 $ belongs to $L^2(\Omega)$. Moreover,
\begin{equation}\label{dom}\left\|\frac{\Phi(t+h)-\Phi(t)}{h}- W\phi'(t)W^{-1}u_0\right\|_{\mathcal{H}} =
   \left\|W\left(\frac{\phi(t+h)-\phi(t)}{h}- \phi'(t)\right)W^{-1}u_0\right\|_{\mathcal{H}}.
\end{equation}

\noindent Since $W$ is unitary it follows that
\begin{equation}\label{dom1}
\left\|W\left(\frac{\phi(t+h)-\phi(t)}{h}- \phi'(t)\right)W^{-1}u_0\right\|_{\mathcal{H}}=\left\|\left(\frac{\phi(t+h)-\phi(t)}{h}- \phi'(t)\right)W^{-1}u_0\right\|_{L^2(\Omega)}
\end{equation}

\noindent Therefore from (\ref{dom}) and (\ref{dom1}) we obtain  that
\begin{equation*}\left\|\frac{\Phi(t+h)-\Phi(t)}{h}- W\phi'(t)W^{-1}u_0\right\|_{\mathcal{H}} =\left\|\left(\frac{\phi(t+h)-\phi(t)}{h}- \phi'(t)\right)W^{-1}u_0\right\|_{L^2(\Omega)}.
\end{equation*}

\noindent Hence by  Lebesgue dominated convergence we have
$$\left\|\left(\frac{\phi(t+h)-\phi(t)}{h}- \phi'(t)\right)W^{-1}u_0\right\|_{L^2(\Omega)} \underset {h \rightarrow 0}\longrightarrow 0.$$

\noindent Thus, the proof of (\ref{dif}) is complete and hence we have the differentiability of the function $\Phi.$  Furthermore, arguing as above, we get that $\Phi' \in C((0, \infty); \mathcal{H})$.

It remains to prove that the function $u$ defined in (\ref{rozwiazanie}) satisfies equation (\ref{stone}). In order to show this last claim we compute the Caputo derivative of the  $u.$ Thus,
\begin{eqnarray*}
     D^{\alpha} u(t) &=& \Phi'(t) - \frac{1}{\Gamma(1-\alpha)}\frac{u_0}{t^{\alpha}}\\
                     & =& W(-i)^{\alpha}a(\xi) E_{\alpha}((-it)^{\alpha}\,a(\xi))W^{-1}u_0\\
     &=& W(-i)^{\alpha}a(\xi)W^{-1} WE_{\alpha}((-it)^{\alpha}\,a(\xi))W^{-1}u_0\\
     &=& (-i)^{\alpha}Au(t),
\end{eqnarray*}

\noindent and the whole proof of Theorem \ref{thm1} is now finished.
\end{proof}

\begin{rem}\rm{Let $A$ be a self-adjoint operator. Then we shall denote by $U_{\alpha}(t)$ the corresponding solution operator family given by theorem (\ref{thm1}). To be more explicit}
\begin{equation*}\label{st1}
U_{\alpha}(t)\phi= W(E_{\alpha}((-it)^{\alpha}\,a(\cdot\,))W^{-1})\phi,\quad\quad\quad \phi\in \mathcal{H},\,\,t\geq 0.
\end{equation*}
\end{rem}
\section{Properties of the solution operator $U_{\alpha}$}
In this section we study the properties of the solution operator $U_{\alpha}$.
\begin{prop}\label{p1}
The family  $\{U_{\alpha}(t)\}_{t\geq 0}$ satisfy,
\begin{itemize}
\item[(i)]$U_{\alpha}(t)$ is strongly continuous for $t\geq 0$ and $U_{\alpha}(0) = I.$
\item[(ii)]$U_{\alpha}(t)(D(A))\subseteq D(A)$ and $AU_{\alpha}(t)x = U_{\alpha}(t)Ax$ for all $x \in D(A)$\, $t\geq 0.$
\end{itemize}
\end{prop}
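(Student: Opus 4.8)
The plan is to transport the whole statement to the multiplication model $L^{2}(\Omega,\mu)$ through the unitary $W$ of Theorem \ref{thm1}, where $U_{\alpha}(t)$ acts as multiplication by the scalar function $\xi\mapsto E_{\alpha}((-it)^{\alpha}a(\xi))$, and then to invoke the uniform bound of Lemma \ref{lema3} together with dominated convergence and the domain characterization $h\in D(A)\iff a(\cdot)(W^{-1}h)(\cdot)\in L^{2}(\Omega)$.

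For (i), I would first note that $E_{\alpha}(0)=1$, hence $U_{\alpha}(0)\phi=W(W^{-1}\phi)=\phi$, i.e. $U_{\alpha}(0)=I$. For strong continuity, fix $\phi\in\mathcal{H}$ and $t_{0}\geq 0$; since $W$ is unitary,
\[
\|U_{\alpha}(t)\phi-U_{\alpha}(t_{0})\phi\|_{\mathcal{H}}
=\Big\|\big(E_{\alpha}((-it)^{\alpha}a(\cdot))-E_{\alpha}((-it_{0})^{\alpha}a(\cdot))\big)W^{-1}\phi\Big\|_{L^{2}(\Omega)}.
\]
The integrand tends to $0$ pointwise as $t\to t_{0}$ because $t\mapsto E_{\alpha}((-it)^{\alpha}\omega)$ is continuous for every $\omega\geq 0$, and by Lemma \ref{lema3} it is dominated by $2M_{\alpha}\,|(W^{-1}\phi)(\cdot)|\in L^{2}(\Omega)$; the Lebesgue dominated convergence theorem then yields $\|U_{\alpha}(t)\phi-U_{\alpha}(t_{0})\phi\|_{\mathcal{H}}\to 0$. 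This is exactly the argument already run in the existence part of Theorem \ref{thm1}.

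For (ii), let $x\in D(A)$, so $a(\cdot)(W^{-1}x)(\cdot)\in L^{2}(\Omega)$ by the spectral theorem. Since $(W^{-1}U_{\alpha}(t)x)(\xi)=E_{\alpha}((-it)^{\alpha}a(\xi))(W^{-1}x)(\xi)$, we have
\[
a(\xi)\,(W^{-1}U_{\alpha}(t)x)(\xi)=E_{\alpha}((-it)^{\alpha}a(\xi))\,a(\xi)(W^{-1}x)(\xi),
\]
and the right-hand side belongs to $L^{2}(\Omega)$ because $|E_{\alpha}((-it)^{\alpha}a(\xi))|\leq M_{\alpha}$ by Lemma \ref{lema3} while $a(\cdot)(W^{-1}x)(\cdot)\in L^{2}(\Omega)$; hence $U_{\alpha}(t)x\in D(A)$. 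For the commutation, using $Ah=W(a(\cdot)(W^{-1}h)(\cdot))$ for $h\in D(A)$ and the commutativity of pointwise multiplication of scalar functions,
\[
AU_{\alpha}(t)x=W\big(a(\cdot)E_{\alpha}((-it)^{\alpha}a(\cdot))(W^{-1}x)(\cdot)\big)=W\big(E_{\alpha}((-it)^{\alpha}a(\cdot))\,a(\cdot)(W^{-1}x)(\cdot)\big)=U_{\alpha}(t)Ax .
\]

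I do not expect a real obstacle here. The only point that requires care is that $a$ need not be bounded, so one must keep the multiplier $E_{\alpha}((-it)^{\alpha}a(\cdot))$ and the possibly unbounded factor $a(\cdot)$ separate, applying the uniform estimate of Lemma \ref{lema3} only to the former — precisely as in the domain computation above. Everything else is a routine transfer through the unitary $W$ and an application of dominated convergence.
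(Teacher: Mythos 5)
Your proof is correct and follows essentially the same route as the paper: the paper simply defers both parts to the arguments already carried out in the existence proof of Theorem \ref{thm1} (dominated convergence via the uniform bound of Lemma \ref{lema3} for strong continuity, the domain characterization $h\in D(A)\iff a(\cdot)(W^{-1}h)(\cdot)\in L^{2}(\Omega)$ for invariance, and the representation of both $A$ and $U_{\alpha}(t)$ as multiplication operators conjugated by $W$ for the commutation), which is exactly what you spell out. No issues.
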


\begin{proof}
$(i)$ This follows from the proof of Theorem \ref{thm1}.

$(ii)$ Using similar consideration as in the proof of Theorem \ref{thm1} we get that
\[ U_{\alpha}(t)(D(A))\subseteq D(A).\]
Next, the commutation property $[U_{\alpha}(t),A] = 0$ on $D(A)$  follows from the fact that
$$A=WM_{a(\cdot\,)}W^{-1}, \quad U_{\alpha}(t)=WM_{E_{\alpha}((-it)^{\alpha}\,a(\cdot\,))}W^{-1},\,\,\,\,\,\,\,\,\,t\geq 0.$$
Thus $$AU_{\alpha}(t)\phi= U_{\alpha}(t)A\phi\,\,\,\,\,\,\,\,\mbox{for all}\,\,\phi\in D(A),\,\,t\geq 0.$$
\end{proof}
Next, we state some  further properties of the solution operator $U_{\alpha}$.
\begin{prop}\label{zbi}
Let $\alpha\in (0,1).$ Then the solution operator  enjoys the following properties
\begin{itemize}
\item[(i)] $U_{\alpha}(t)^*=WE_{\alpha}((it)^{\alpha}a(\cdot\,))W^{-1},$\,\,\,\, \,\,\,$t>0.$
\item[(ii)] $U_{\alpha}(t)U_{\alpha}(t)^*=U^{*}_{\alpha}(t)U_{\alpha}(t)=W|E_{\alpha}((it)^{\alpha}a(\cdot\,))|^2W^{-1},$\,\, \,\,\,$t>0.$
\item[(iii)] Let $e^{-itA}$ be the unitary group generated by the self-adjoint operator $A.$ Then $$\displaystyle\lim_{\alpha\to 1^{-}}U_{\alpha}(t)\phi=e^{-itA}\phi, \quad\mbox{ for every}\, \phi\in\mathcal{H},\quad\mbox{and}\quad t\geq 0.$$
\end{itemize}
\end{prop}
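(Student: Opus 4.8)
The plan is to handle each of the three assertions by passing through the unitary equivalence $A = W M_{a} W^{-1}$, so that all operators in sight become multiplication operators and the problem is reduced to pointwise identities about the Mittag-Leffler function, together with the uniform bounds supplied by Lemma~\ref{lema3}.

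For part (i), I would first note that, since $U_{\alpha}(t) = W M_{E_{\alpha}((-it)^{\alpha} a(\cdot))} W^{-1}$ with $W$ unitary, the adjoint is $U_{\alpha}(t)^{*} = W M_{\overline{E_{\alpha}((-it)^{\alpha} a(\cdot))}} W^{-1}$. The Mittag-Leffler function has real Taylor coefficients, hence $\overline{E_{\alpha}(z)} = E_{\alpha}(\bar z)$ for all $z$; and since $a(\xi)$ is real-valued (because $A$ is self-adjoint) and $\overline{(-it)^{\alpha}} = \overline{e^{-i\alpha\pi/2}}\, t^{\alpha} = e^{i\alpha\pi/2} t^{\alpha} = (it)^{\alpha}$, we get $\overline{E_{\alpha}((-it)^{\alpha} a(\xi))} = E_{\alpha}((it)^{\alpha} a(\xi))$ pointwise $\mu$-a.e. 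This gives (i). Part (ii) is then immediate: $U_{\alpha}(t) U_{\alpha}(t)^{*}$ and $U_{\alpha}(t)^{*} U_{\alpha}(t)$ are both $W$-conjugates of the multiplication operator by the product $E_{\alpha}((-it)^{\alpha} a(\cdot)) \, \overline{E_{\alpha}((-it)^{\alpha} a(\cdot))} = |E_{\alpha}((it)^{\alpha} a(\cdot))|^{2}$, and since multiplication operators by scalar functions commute, the two coincide.

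For part (iii) I would again reduce to $L^{2}(\Omega, \mu)$: fixing $\phi \in \mathcal{H}$, set $f = W^{-1}\phi \in L^{2}(\Omega, \mu)$, so that $\|U_{\alpha}(t)\phi - e^{-itA}\phi\|_{\mathcal{H}}^{2} = \int_{\Omega} |E_{\alpha}((-it)^{\alpha} a(\xi)) - e^{-it a(\xi)}|^{2} |f(\xi)|^{2} \, d\mu(\xi)$, using that the spectral calculus gives $e^{-itA} = W M_{e^{-it a(\cdot)}} W^{-1}$ (note $a \geq 0$ since $A$ is positive, so $(-it)^{\alpha} a(\xi)$ lies in the sector where the representation \eqref{mitag1} is valid). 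Now I would invoke dominated convergence with respect to $\mu$: the integrand is bounded by $(M+1)^{2} |f(\xi)|^{2} \in L^{1}(\mu)$, where $M$ is the uniform bound of Lemma~\ref{lema3} (valid for $\alpha$ in a left-neighbourhood of $1$, e.g. $\alpha \in [1/2,1)$), and $|e^{-it a(\xi)}| = 1$. The pointwise convergence $E_{\alpha}((-it)^{\alpha} w) \to e^{-it w}$ as $\alpha \to 1^{-}$, for each fixed $w \geq 0$ and $t \geq 0$, follows because $E_{\alpha}$ depends continuously on $\alpha$ (term-by-term, with the series converging uniformly on compacta in $\alpha$ by the Stirling estimate already used in the proof of Lemma~\ref{lema3}), $(-it)^{\alpha} = e^{-i\alpha\pi/2} t^{\alpha} \to e^{-i\pi/2} t = -it$, and $E_{1}(z) = e^{z}$. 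Hence the integrand tends to $0$ pointwise, and dominated convergence yields $\|U_{\alpha}(t)\phi - e^{-itA}\phi\|_{\mathcal{H}} \to 0$.

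The main obstacle is the dominated-convergence step in (iii): one must make sure a single integrable majorant works \emph{uniformly in $\alpha$} as $\alpha \to 1^{-}$, which is exactly what Lemma~\ref{lema3}(b) is designed to provide (the bound $M$ there is independent of $\alpha \in [1/2,1)$ and of $t \geq 0$); and one must be slightly careful that the argument $(-it)^{\alpha} a(\xi)$ stays in the admissible sector $|\arg z| < \pi/\alpha$ for the representation \eqref{mitag1}, which holds since $\arg((-it)^{\alpha} a(\xi)) = -\alpha\pi/2$ in modulus is $< \pi/\alpha$ for $\alpha \in (0,1)$. The pointwise limit $E_{\alpha}(z) \to E_{1}(z)$ is routine once continuity of $\alpha \mapsto E_{\alpha}(z)$ on compacta is in hand, which the Stirling bound from Lemma~\ref{lema3} already gives.
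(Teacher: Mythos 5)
Your proposal is correct and follows essentially the same route as the paper: part (i) by conjugating the multiplication operator and using $\overline{E_{\alpha}(z)}=E_{\alpha}(\bar z)$ with $a$ real, part (ii) as an immediate consequence, and part (iii) by reducing to $L^2(\Omega,\mu)$ and applying dominated convergence with the $\alpha$-uniform majorant from Lemma~\ref{lema3}(b). The only difference is that you justify the pointwise limit $E_{\alpha}(z)\to E_1(z)=e^z$ via the Stirling estimate, a detail the paper simply asserts.
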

\begin{rem}
In the paper of Dong and Xu \cite{Dong} it has been pointed out that the quantity $\left\|U_{\alpha}(t)u_0 \right\|$ is not conserved during the evolution.
\end{rem}

\begin{proof}
(i) Let us take $\phi, \psi \in \mathcal{H}$. Using the fact that $W$ is a unitary operator we get
\begin{eqnarray*}
 (U_{\alpha}(t) \psi, \phi )_{\mathcal{H}}&=& (WE_{\alpha}((-it)^{\alpha}a(\cdot\,))W^{-1} \psi, \phi )_{\mathcal{H}}\\
  &=&(E_{\alpha}((-it)^{\alpha}a(\cdot\,))W^{-1} \psi, W\phi )_{L^2(\Omega)}\\
  &=&\int_{\Omega} E_{\alpha}((-it)^{\alpha}a(x)) W^{-1}\psi(x) \overline{W^{-1}\phi(x)} dx\\
  &=&\int_{\Omega}  W^{-1}\psi(x) \overline{E_{\alpha}((it)^{\alpha}a(x))W^{-1}\phi(x)} dx\\
  &=& (W^{-1}\psi, E_{\alpha}((it)^{\alpha}a(\cdot))W^{-1}\phi)_{L^2(\Omega)}\\
  &=&(\psi, WE_{\alpha}((it)^{\alpha}a(\cdot))W^{-1}\phi)_{\mathcal{H}}.
\end{eqnarray*}
\
Hence, we obtain that,
\[
  U_{\alpha}(t)^*=WE_{\alpha}((it)^{\alpha}a(\cdot\,))W^{-1}.
\]
The proof of $(ii)$ follows from the very definition of $U_{\alpha}(t)$. Next, we show $(iii).$
 We will prove that
 \begin{equation}\label{conv}\displaystyle\lim_{\alpha\to 1^{-}}\|U_{\alpha}(t)\phi- e^{-itA}\phi\|_{\mathcal {H}}=0,\quad\quad t\geq 0,\quad \phi\in\mathcal{H}.
 \end{equation}
Since, $W$ is an isometry and $$e^{-itA}=We^{-it a(\xi)}W^{-1},$$
  we have that
\begin{equation}\label{conv1}
  \|U_{\alpha}(t)\phi- e^{-itA}\phi\|^2_{\mathcal {H}}=\int_{\Omega}\left|E_{\alpha}((-it)^{\alpha}a(\xi))-e^{-it a(\xi)}\right|^2\left|(W^{-1}\phi)(\xi)\right|^2d\xi.
\end{equation}
  According to Lemma \ref{lema3} part $(b),$ for each $t>0$ the function $|E_{\alpha}((-it)^{\alpha}a(\xi))|$ \,is bounded independently of
  $\xi\in M$  and $\alpha\in [1/2,1).$ But then, there is $M$ such that for all $\alpha\in [1/2,1),$ and $\xi\in \Omega$
  $$|E_{\alpha}((-it)^{\alpha}a(\xi))-e^{-it a(\xi)}|\leq M.$$
Hence
\[
  |E_{\alpha}((-it)^{\alpha}a(\xi))-e^{-it a(\xi)}||(W^{-1}\phi)(\xi)|\leq M |(W^{-1}\phi)(\xi)|.
\]
 Moreover
 $$\lim_{\alpha\to 1^{-}}\left(E_{\alpha}((-it)^{\alpha}a(\xi))-e^{-it a(\xi)}\right)=0$$ and
$W^{-1}\phi\in L^{2}(\Omega)$. Then the dominated convergence theorem applies to (\ref{conv1}) when $\alpha\to 1^{-}$, and thus  the proof of (iii) is finished.

\end{proof}

\subsection{An example}

We consider $A\,=-\Delta ,$\,the Laplacian operator on $L^2(\mathbb R^{n}).$ Then by the Spectral Theorem we have that
$$Au:=\mathcal{F}^{-1}(|\xi|^2\mathcal{F})u,\,\,\,\,\,\,\,\,\mbox{for}\,\,\,\,\,\,u\in D(A):=\mathcal{S}(\mathbb R^n).$$ Next we find the strong solution of the following  fractional evolution equation. Suppose that $0<\alpha<1$ and consider the initial value problem
\begin{eqnarray}\label{sch1}
\frac{\partial^{\alpha} u}{\partial t^{\alpha}}(t,x)&=& (-i)^{\alpha}(-\Delta) u(t,x),\,\,\,\,\,t>0,\,\,x\in\mathbb R^n\\
                                                               u(0,\,\cdot\,)&=& g(\,\cdot\,)\in C_0^{\infty}(\mathbb R^n) \nonumber
\end{eqnarray}
We  will show that the strong solution of (\ref{sch1}) is defined by a convolution kernel which is given by the Fourier transform in the distributional sense of the Mittag-Leffler function. To prove this claim we first recall some basic facts. We denote by $\mathcal{S}(\mathbb R^n)$ and by $\mathcal{S}'(\mathbb R^n)$ the Schwartz space and the space of tempered distributions respectively.  Let  $\varphi$ be a function of $\mathcal{S}(\mathbb R^n).$ Then we recall that the action of the dilation operator on  $\varphi$ is defined as $\varphi_{\lambda}(x)= \varphi(\lambda x),$\, $\lambda\in\mathbb R,$ $x\in \mathbb R^n.$ Furthermore the action on the Fourier transform $\mathcal{F}$ is
\begin{equation}\label{d1}(\mathcal{F}\varphi)_{\lambda}=\frac{1}{\lambda^n} \mathcal{F}\varphi_{1/\lambda}\,\,\,\,\,\mbox{and}\,\,\,\,\, \mathcal{F}\varphi_{\lambda}= \frac{1}{\lambda^n}(\mathcal{F}\varphi)_{1/\lambda^n}\,\,\,\,\,\lambda>0.
\end{equation}
If $u$ is a distribution then we recall that\,\, $\langle u_{\lambda},\varphi\rangle=\displaystyle\frac{1}{\lambda^n}\langle u,\varphi_{1/\lambda}\rangle,$ and the same identities as (\ref{d1}) are also verified.\\
Next we set,

$$e(\xi)=E_{\alpha}((-i)^{\alpha}|\xi|^2),\,\,\,\,\,\xi\in\mathbb R^n.\,\,\,$$ Thus,
$$e_{t^{\alpha/2}}(\xi)=E_{\alpha}((-it)^{\alpha}
|\xi|^2).$$
We see that the hypothesis of Theorem \ref{thm1} are satisfied. Hence the strong solution of (\ref{sch1}) is given by
\begin{equation}\label{wzor}
u(t,x)= \mathcal{F}^{-1}(e_{t^{\alpha/2}}(\mathcal{F}g))(x).
\end{equation}
We notice that  the function $\xi\mapsto e_{t^{\alpha/2}}(\xi)$ is bounded for each $t\geq 0$ by Lemma \ref{lema3}. Thus $e_{t^{\alpha/2}}$ defines a tempered distribution by integration. But then,  $\mathcal{F}(e_{t^{\alpha/2}})$  also is a tempered distribution. Now if  $u\in \mathcal{S}'(\mathbb R^n)$ and $\varphi\in \mathcal{S}(\mathbb R^n)$ then   we have that
$u*\varphi\in C^{\infty}(\mathbb R^n)\cap \mathcal{S}'(\mathbb R^n)$ and $(\mathcal{F}u)(\mathcal{F}\varphi)=\mathcal{F}(u*\varphi)$, see \cite{Linares}.
But then we have that

\begin{equation*}
\mathcal{F}^{-1}(e_{t^{\alpha/2}}(\mathcal{F}g))=(\mathcal{F}^{-1}(e_{t^{\alpha/2}})*g)
\end{equation*}
as tempered distributions. Moreover
 \begin{equation*}
 \mathcal{F}^{-1}(e_{t^{\alpha/2}})=\frac{1}{t^{n\alpha/2}} (\mathcal{F}^{-1}e)_{1/t^{\alpha/2}}.
 \end{equation*}
 Hence taking into account the above considerations we can represent the solution of (\ref{sch1}) as
 \begin{eqnarray*}
 u(t,x)= \frac{1}{t^{n\alpha/2}}((\mathcal{F}^{-1}e)_{1/t^{\alpha/2}}*g)(x).
 \end{eqnarray*}
 Since $e_{t^{\alpha/2}} \in C^{\infty} \cap L^{\infty}$, we have that  $e_{t^{\alpha/2}}(\mathcal{F}g) \in \mathcal{S}$.
 Thus, from formula (\ref{wzor}), we get that the function $x\mapsto u(t,x)$ belongs to the Schwartz space for each $t \geq 0$.

 Using Proposition \ref{zbi} and the above considerations we close the paper with the following observation.
\begin{prop}
  Let $\phi \in \mathcal{S}(\mathbb{R}^n)$, then
    \[
      \frac{1}{t^{n\alpha/2}}((\mathcal{F}^{-1}e)_{1/t^{\alpha/2}}*\phi) \overset{L^2}{\underset{\alpha \rightarrow 1}\longrightarrow} \frac{1}{(4 \pi i t)^{\frac{n}{2}}} \int_{\mathbb{R}^n}
      e^{i \frac{|\cdot-y|^2}{4t}} \phi(y) dy.
    \]

\end{prop}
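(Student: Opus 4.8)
The plan is to recognise both sides of the asserted limit as objects already studied in this section: for $A=-\Delta$ the left-hand side is exactly $U_{\alpha}(t)\phi$, while the right-hand side is the classical free Schr\"odinger propagator, which is $e^{-itA}\phi$. The convergence is then immediate from Proposition \ref{zbi}(iii), and the only analytic content is the closed-form identification of the propagator kernel.

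First I would note that the solution formula derived in the example is valid for every $\phi\in\mathcal S(\mathbb R^n)$, not merely for $C_0^{\infty}$ data: the multiplier $\xi\mapsto e_{t^{\alpha/2}}(\xi)=E_{\alpha}((-it)^{\alpha}|\xi|^2)$ lies in $C^{\infty}\cap L^{\infty}$ by Lemma \ref{lema3}, so $e_{t^{\alpha/2}}\mathcal F\phi\in\mathcal S$, and the chain of identities already used in the example gives
\[
  U_{\alpha}(t)\phi=\mathcal F^{-1}\!\big(e_{t^{\alpha/2}}\,\mathcal F\phi\big)=\big(\mathcal F^{-1}e_{t^{\alpha/2}}\big)*\phi=\frac{1}{t^{n\alpha/2}}\big((\mathcal F^{-1}e)_{1/t^{\alpha/2}}*\phi\big),
\]
which is precisely the expression on the left of the claimed limit (the dilation identities \eqref{d1} and the convolution theorem enter exactly as in the example). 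Alternatively one may observe that both sides are continuous in $\phi$ with respect to the $L^2$ norm and pass from $C_0^{\infty}$ to $\mathcal S$ by density.

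Next I would identify the right-hand side with $e^{-itA}\phi$. Since $A=-\Delta$ acts on the Fourier side as multiplication by $|\xi|^2$, the unitary group $e^{-itA}$ acts as multiplication by $e^{-it|\xi|^2}$, so $e^{-itA}\phi=\mathcal F^{-1}(e^{-it|\xi|^2}\mathcal F\phi)=K_t*\phi$ with $K_t=\mathcal F^{-1}(e^{-it|\xi|^2})$ taken in $\mathcal S'(\mathbb R^n)$. The standard evaluation of the Fourier transform of a complex Gaussian --- regularise by $e^{-(\varepsilon+it)|\xi|^2}$, use $\int_{\mathbb R^n}e^{ix\cdot\xi}e^{-(\varepsilon+it)|\xi|^2}\,d\xi=\big(\pi/(\varepsilon+it)\big)^{n/2}e^{-|x|^2/(4(\varepsilon+it))}$ with the principal branch of the square root, and let $\varepsilon\to0^{+}$ --- yields $K_t(x)=(4\pi it)^{-n/2}e^{i|x|^2/(4t)}$ (cf. \cite{Linares}). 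Because $\phi\in\mathcal S\subset L^1$ and $|K_t|$ is a constant, $K_t*\phi$ is an absolutely convergent integral, so
\[
  e^{-itA}\phi(x)=\frac{1}{(4\pi it)^{n/2}}\int_{\mathbb R^n}e^{i\frac{|x-y|^2}{4t}}\phi(y)\,dy,
\]
the right-hand side of the statement.

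Finally, Proposition \ref{zbi}(iii), applied with $\mathcal H=L^2(\mathbb R^n)$ and $A=-\Delta$, gives $\|U_{\alpha}(t)\phi-e^{-itA}\phi\|_{L^2}\to0$ as $\alpha\to1^{-}$; combined with the two identifications above this is exactly the assertion. The one step requiring genuine care is the closed-form evaluation of $K_t$, namely keeping track of the correct branch of $(4\pi it)^{n/2}$ as $\varepsilon\to0^{+}$ (the argument tending to $n\pi/4$); everything else is bookkeeping with tools already deployed in the example.
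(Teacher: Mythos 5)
Your proposal is correct and follows exactly the route the paper intends: the paper presents this proposition as an immediate consequence of the example's identification of the left-hand side with $U_{\alpha}(t)\phi$ together with Proposition \ref{zbi}(iii), and you supply the same argument, merely filling in the standard evaluation of the free propagator kernel that the paper leaves implicit.
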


\subsection*{Acknowledgements}
During the preparation of this manuscript H.Prado and  P. G\'orka enjoyed the support of FONDECYT grant \# 1130554, and MECESUP, USA 1298,\#596.
J. Trujillo has been partially supported the FEDER fund and by project MTM2013-41704-P from the Government of Spain. Some part of the paper has
been performed during the visit of P.G. to the USACH in Santiago de Chile. P.G. wish to thank for its hospitality. The
authors would like to thank the referee for his comments and pointing out the mistake in the proof of Lemma 1 in the first version of the manuscript.


\begin{thebibliography}{999}
\bibitem{Baskin} {\sc E. Baskin, A. Iomin}, Superdiffusion on a Comb Structure, Phys. Rev. Lett. 93 (2004), 120603.
\bibitem{Bayin} {\sc S. Bayin,} Time fractional Schr\"{o}dinger equation: Fox's H-functions and the effective potential, J. Math. Phys. 54 (2013), 012103.
\bibitem{Baz00} {\sc E. Bazhlekova,} Fractional Evolution Equations in Banach Spaces, Doctoral Thesis, Technische Universiteit Eindhoven, 2001.
\bibitem{Dong} {\sc J. Dong, M. Xu,} Space-time fractional Schr\"{o}dinger equation with time-independent potentials, J. Math. Anal. Appl. 344 (2008), 1005-1017.
\bibitem{Ford} {\sc N. J. Ford, M. M. Rodrigues, N. Vieira,} A numerical method for the fractional Schr\"{o}dinger type equation of spatial dimension two,  Fract. Calc. Appl. Anal.
 16 (2013), no. 2, 454-468.
\bibitem{GIL}  {\sc R. Gorenflo, A. Iskenderov, Y. Luchko,}  Mapping between solutions of fractional diffusion-wave equations. Fract. Calc. Appl. Anal. 3 (2000), no. 1, 75-86.
\bibitem{GLL} {\sc R. Gorenflo, J. Loutchko, Y. Luchko,} Computation of the Mittag-Leffler function $E_{\alpha,\beta}(z)$ and its derivatives. Frac. Calc. Appl. Anal. 5 no 4, 491-518 (2002)
\bibitem{Iomin1} {\sc A. Iomin}, On fractional time quantum dynamics,arXiv:0909.1183v1.
\bibitem{Iomin2} {\sc A. Iomin}, Fractional-Time Schr\"{o}dinger Equation: Fractional Dynamics on a Comb, arXiv:1108.6178v1.
\bibitem{KST} {\sc A.A.Kilbas, H.M. Sirivastava, J.J. Trujillo } Theory and Applications of Fractional Differential Equations, North Holland, Mathematics Studies 204. Editor Jan van Mill, Elsevier 2006.
\bibitem{Laskin1} {\sc N. Laskin,} Fractional Schr\"{o}dinger equation, Phys. Rev. E 66 (2002), 056108.
\bibitem{Laskin2} {\sc N. Laskin,} Factional quantum mechanics, Phys. Rev. E 62 (2000), 3135-3145.
\bibitem{Laskin3} {\sc N. Laskin,} Fractional quantum mechanics and Levy path integrals, Phys. Lett. A 268 (2000), 298-305.
\bibitem{Linares} {\sc F. Linares, G. Ponce,} Introduction to Nonlinear Dispersive Equations, Springer, 2009.
\bibitem{Naber} {\sc M. Naber,} Time fractional Schr\"{o}dinger equation, J. Math. Phys. 45 (2004), no. 8, 3339-3352.
\bibitem{RS} {\sc M. Reed, B. Simon,} Methods of Modern Mathematical Physics I: Functional Analysis. Revised and enlarged edition, 1980.  Academic Press, an imprint of Elsevier.
\bibitem{T} {\sc M. Taylor,} Partial Differential Equations II: Qualitative Studies of Linear Equations. 2nd edition,2011, Springer Applied Mathematical Sciences 116.




\end{thebibliography}
\end{document}